\numberwithin{equation}{section}
\def\arg{\operatorname{arg}}
\newtheorem{lem}{Lemma}[section]
\newtheorem{thm}{Theorem}[section]
\newtheorem{defi}{Definition}[section]
\theoremstyle{remark}
\title[Borel radius]{Common Borel radius of an algebroid function and its derivative}
\subjclass[2000]{Primary 30D35.}
\thanks{The work is supported by NSF of China (No.10871108)}
\author[Wu]{Wu Nan$^{1}$}
\date{\today, Preliminary version}
\author[Xuan]{Xuan Zu-xing$^{1,2}$}
\address{$^{1}$Department of Mathematical Sciences}\address{Tsinghua University}\address{Beijing,
100084}\address{People's Republic of China}
\email{wunan07@gmail.com}
\address{$^{2}$Basic Department}\address{Beijing Union University}\address{No.97 Bei Si Huan Dong
Road}\address{Chaoyang District}\address{Beijing,
100101}\address{People's Republic of China}
\email{xuanzuxing@ss.buaa.edu.cn}
\begin{document}

\begin{abstract}

In this article, by comparing the characteristic functions, we prove
that for any $\nu$-valued algebroid function $w(z)$ defined in the
unit disk with
$\limsup_{r\rightarrow1-}T(r,w)/\log\frac{1}{1-r}=\infty$ and the
hyper order $\rho_2(w)=0$, the distribution of the Borel radius of
$w(z)$ and $w'(z)$ is the same. This is the extension of G.
Valiron's conjecture for the meromorphic functions defined in
$\widehat{\mathbb{C}}$.
\end{abstract}

\keywords{Algebroid functions, Borel radius.}

\maketitle

\section{Introduction and Main Results}
\setcounter{equation}{0}

The value distribution theory of meromorphic functions due to R.
Nevanlinna(see \cite{Hayman} for standard references) was extended
to the corresponding theory of algebroid functions by H. Selberg
\cite{He}, E. Ullrich \cite{U} and G. Valiron \cite{V1} around 1930.
The singular direction for $w(z)$ is one of the main objects studied
in the theory of value distribution of algebroid functions. Several
types of singular directions have been introduced in the literature.
Their existence and some connections between them have also been
established\ \cite{R,T,V2.}.

In 1928, G. Valiron \cite{V3} asked the following:

 \emph{Does there exist a common
Borel direction of a meromorphic function and its derivative?}

This question was investigated by many mathematicians, such as
G.Valiron \cite{V4}, A.Rauch \cite{R2}, C.T. Chuang \cite{Chuang}.
They proved the existence of common Borel directions under some
conditions. However, it is still an open problem till now. For the
case of the unit disk, Zhang \cite{Zhang} solved the problem, he
proved that the Borel radius of a meromorphic function of finite
order is the same as its derivative. We associated it to the
algebroid functions and ask weather the Borel radius of a
$\nu-$valued algebroid function is the same to its derivative. To
state our results clearly, we begin with some basic nations for
algebroid functions.

Let $w=w(z) (z\in \Delta)$ be the $\nu$-valued algebroid function
defined by the irreducible equation
\begin{equation}\label{1.1}
A_\nu(z)w^\nu+A_{\nu-1}(z)w^{\nu-1}+\cdots+A_0(z)=0,
\end{equation}
where $A_\nu(z),...,A_0(z)$ are analytic functions without any
common zeros. The single-valued domain $\widetilde{R}_z$ of
definition of $w(z)$ is a $\nu$-valued covering of the $z$-plane and
it is a Riemann surface.

A point in $\widetilde{R}_z$ is denoted by $\widetilde{z}$ if its
projection in the $z$-plane is $z$. The open set which lies over
$|z|<r$  is denoted by $|\widetilde{z}|<r$.  Let $n(r,a)$ be the
number of zeros, counted according to their multiplicities, of
$w(z)-a$ in $|\widetilde{z}|\leq r,$ $n(r,a)$ be the number of
distinct zeros of $w(z)-a$ in $|\widetilde{z}|\leq r.$ Let
\begin{eqnarray*}
 N(r,a)&=&\frac{1}{\nu}\int_0^r\frac{n(t,a)-n(0,a)}{t}d
 t+\frac{n(0,a)}{\nu}\log{r},\\
 m(r,a)&=&\frac{1}{2\pi\nu}\int_{|\widetilde{z}|=r}\sum\limits_{j=1}^\nu\log^+|\frac{1}{w_j(r e^{i\theta})-a}|d \theta,\ \
 z=re^{i\theta},\\
 T(r,a)&=&m(r,a)+N(r,a).
\end{eqnarray*}
 where $|\widetilde{z}|=r$ is the boundary of
$|\widetilde{z}|\leq r$. Moreover, $S(r,w)$ is a conformal invariant
and is called the mean covering number of $|\widetilde{z}|\leq r$
into $w$-sphere. We call $T(r,w)=T(r,\infty)$ the characteristic
function of $w(z)$. It is known from [\cite{He}, $3^o$, p.84] that
$T(r,a)=m(r,\infty)+N(r,\infty)+O(1).$ We define the order and hyper
order of a $\nu$-valued algebroid function as
$$\rho(w)=\limsup\limits_{r\rightarrow1-}\frac{\log T(r,w)}{\log\frac{1}{1-r}},$$
and
$$\rho_2(w)=\limsup\limits_{r\rightarrow1-}\frac{\log\log T(r,w)}{\log\frac{1}{1-r}}.$$

Given an angular domain
$$\Delta(\theta_0,\varepsilon)=\{z||\arg z-\theta_0|<\varepsilon\},0<\varepsilon<\frac{\pi}{2},$$
we denote $\{z:|z|<r, |\arg z-\theta|<\varepsilon\}$ by
$\Omega(r,\theta,\varepsilon)$ and write $\widetilde{\Omega}$ for
the part of $\widetilde{R}_z$ on $\Omega(r,\theta,\varepsilon)$.
$\overline{n}(r,\Delta(\theta,\varepsilon),w=a)$ denotes the numbers
of $w(z)-a$ in $\widetilde{\Omega}$(not counting multiplicities).

\begin{eqnarray*}
\overline{N}(r,\Delta(\theta,\varepsilon),w=a)&=&\frac{1}{\nu}\int_0^r\frac{\overline{n}(t,\Delta(\theta,\varepsilon),w=a)-\overline{n}(0,\Delta(\theta,\varepsilon),w=a)}{t}dt\\
&+&\frac{\overline{n}(0,\Delta(\theta,\varepsilon),w=a)}{\nu}\log r
\end{eqnarray*}
is called the counting function of zeros of $w(z)-a$ in $\Omega$.

Next, we give the definition of the Borel radius of a $\nu$-valued
algebroid function in the unit disk.
\begin{defi}
A radius $L(\theta): \arg z=\theta, 0<|z|<1$ is called a Borel
radius of a $\nu$-valued algebroid function $w(z)$ of order $\rho$,
if for any $\varepsilon>0$
$$\limsup\limits_{r\rightarrow1-}\frac{\log \overline{N}(r,\Delta(\theta,\varepsilon),w=a)}{\log\frac{1}{1-r}}=\rho$$
holds for any $a\in\mathbb{\widehat{C}}$, except for $2\nu$
exceptions.
\end{defi}

In this note, we give a positive answer to the G. Valiron's
conjecture for algebroid functions defined in the unit disk.
\begin{thm}\label{thm1.1}
The distribution of the Borel radius of a $\nu$-valued algebroid
function $w(z)$ with the order $0\leq\rho(w)<\infty$ and
$$ \limsup\limits_{r\rightarrow1-}\frac{T(r,w)}{\log\frac{1}{1-r}}=\infty $$
is the same to that of its derivative.
\end{thm}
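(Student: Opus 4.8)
The plan is to establish the two inclusions separately — that every Borel radius of $w$ is one of $w'$ and conversely — and to reduce each of them to a comparison of the growth, measured on angular sectors $\Omega(r,\theta,\varepsilon)$, of the value-distribution functions of $w$ and of $w'$. The guiding principle is that the largest order any counting function $\overline{N}(r,\Delta(\theta,\varepsilon),w=a)$ can attain is the global order $\rho$, so that a direction is a Borel radius precisely when the angular characteristic itself grows with the full order $\rho$ on every sector about it. I would first record the global fact $\rho(w')=\rho(w)=:\rho$; this rests on bounding $T(r,w')$ and $T(r,w)$ against one another up to a term $S(r,w)$, and the hypotheses $\rho(w)<\infty$ together with $\limsup_{r\to1-}T(r,w)/\log\frac{1}{1-r}=\infty$ are exactly what force $S(r,w)=o(T(r,w))$, so that neither differentiation nor its inverse changes the order.

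The local machinery consists of two lemmas for $\nu$-valued algebroid functions in the disk, set up on the covering piece $\widetilde{\Omega}$ over $\Omega(r,\theta,\varepsilon)$. The first is an angular logarithmic-derivative estimate, controlling the proximity of $w'/w$ on the sector by $o(T_{\mathrm{ang}}(r,w))$; together with its analogue for $w/w'$ it gives that the angular characteristics $T_{\mathrm{ang}}(r,w)$ and $T_{\mathrm{ang}}(r,w')$ have the same order on each sector. The second is an angular second fundamental theorem of the form $(q-2\nu)\,T_{\mathrm{ang}}(r,w)\le\sum_{j=1}^{q}\overline{N}(r,\Delta(\theta,\varepsilon),w=a_j)+S_{\mathrm{ang}}$, valid for any $q$ distinct values and with the exceptional count $2\nu$ matching the definition of the Borel radius. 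One further bridge is elementary and handles the value $\infty$: the poles of $w$ and of $w'$ are the same points of $\widetilde{R}_z$, so the two angular counting functions for $\infty$ differ by $O(1)$ and have equal order.

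Granting these, suppose $\theta$ is a Borel radius of $w$. Then for every $\varepsilon$ all but at most $2\nu$ values $a$ give $\overline{N}(r,\Delta(\theta,\varepsilon),w=a)$ of order $\rho$; since each counting function is dominated by $T_{\mathrm{ang}}(r,w)$, the latter has order exactly $\rho$, and by the first lemma so does $T_{\mathrm{ang}}(r,w')$. Now if more than $2\nu$ values $b$ had $\overline{N}(r,\Delta(\theta,\varepsilon),w'=b)$ of order strictly below $\rho$, applying the angular second fundamental theorem to $w'$ with $q=2\nu+1$ such values would bound $T_{\mathrm{ang}}(r,w')$ by a sum of order $<\rho$ plus $o(T_{\mathrm{ang}}(r,w'))$, forcing its order below $\rho$ — a contradiction. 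Hence at most $2\nu$ values are deficient for $w'$ along $\theta$, i.e.\ $\theta$ is a Borel radius of $w'$; the reverse implication runs symmetrically, the roles of $w$ and $w'$ in the angular comparison being interchanged.

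The step I expect to be the main obstacle is the rigorous construction of the two local lemmas for algebroid rather than meromorphic functions. Passing to the $\nu$-sheeted surface $\widetilde{R}_z$ means $w'$ lives over a branch structure different from that of $w$, so both the angular logarithmic-derivative estimate and the angular second fundamental theorem must be assembled on $\widetilde{\Omega}$ with the branch-point and valence contributions tracked so that the exceptional number comes out to exactly $2\nu$. Delicate within this is the two-sided order comparison of $T_{\mathrm{ang}}(r,w)$ and $T_{\mathrm{ang}}(r,w')$: the bound of $T_{\mathrm{ang}}(r,w)$ from above by $T_{\mathrm{ang}}(r,w')$ is the awkward direction and seems to require a Milloux-type inequality on the sector rather than a bare logarithmic-derivative estimate. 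Throughout, every error term must be kept of order $o(T_{\mathrm{ang}})$, and it is precisely the hypothesis $\limsup_{r\to1-}T(r,w)/\log\frac{1}{1-r}=\infty$ that prevents the disk error terms, which are of size $O(\log\frac{1}{1-r})$, from reaching the order-$\rho$ growth the argument must detect.
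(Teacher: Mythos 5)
Your plan defers its entire technical content to two angular lemmas for algebroid functions --- an angular second fundamental theorem with exactly $2\nu$ exceptional terms on sectors of the unit disk, and a two-sided order comparison of the angular characteristics of $w$ and $w'$ --- and neither is constructed; you yourself flag them as ``the main obstacle.'' This is a genuine gap, not a routine verification left to the reader. Your guiding principle, that $L(\theta)$ is a Borel radius if and only if the angular characteristic has order $\rho$ on every sector about $\theta$, is precisely what the paper is unable to establish for algebroid functions: Section 5 poses as an open question whether the growth condition \eqref{5.1} implies that $L(\theta)$ is a Borel radius, and that implication is exactly the angular-SFT direction your argument invokes when applied to $w'$. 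There is also a concrete false step: your ``elementary bridge'' asserting that $w$ and $w'$ have the same poles on $\widetilde{R}_z$, so that their angular counting functions for $\infty$ differ by $O(1)$, fails for algebroid functions. At a branch point of the surface where $w$ takes a finite value with ramification $k\geq 2$, $w$ behaves like $a+c(z-z_0)^{1/k}$ while $w'$ behaves like $(z-z_0)^{\frac{1}{k}-1}$, so $w'$ acquires poles at points where $w$ is finite; this is exactly the kind of branch-structure effect that makes the meromorphic-to-algebroid transfer of angular machinery nontrivial.

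By contrast, the paper avoids your obstacle entirely by conformal transplantation (Lemma \ref{01}): the sector $G(1,\theta,\eta)$ is mapped conformally onto the unit disk, root counting is a conformal invariant, so the angular counting functions of $w$ are compared with the disk counting functions of $M(\zeta)=w(z(\zeta))$. After that, only disk Nevanlinna theory for algebroid functions is needed: the disk second fundamental theorem, Zeng's theorem $\rho(M)=\rho(M')$ (Lemma \ref{lem2.3}), and the estimate $T(t,z'(\zeta))\leq 3\log\frac{2}{1-t}$ to absorb the chain-rule factor in $M'(\zeta)=w'(z(\zeta))z'(\zeta)$. In particular, the ``awkward direction'' you identify --- bounding the angular growth of $w$ from above in terms of $w'$ --- comes for free from the global disk identity $\rho(M)=\rho(M')$, and no angular Milloux-type inequality is ever required. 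To salvage your route you would first have to prove the angular second fundamental theorem for algebroid functions on sectors of the disk, i.e.\ essentially resolve the paper's open question, which is a harder task than the theorem itself.
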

\begin{thm}\label{thm1.2}
The distribution of the Borel radius of a $\nu$-valued algebroid
function $w(z)$ with order $\rho(w)=\infty$ and the hyper order
$\rho_2(w)=0$ is the same to that of its derivative.
\end{thm}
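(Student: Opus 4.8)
The plan is to reduce the statement about Borel radii to a comparison of \emph{angular characteristic functions} of $w$ and $w'$. For a fixed candidate radius $L(\theta)$ and each $\eps>0$ I would introduce the angular characteristic $\mathcal{S}(r,\theta,\eps,w)$ of the algebroid function on the part $\widetilde{\Omega}$ of $\widetilde{R}_z$ over $\Omega(r,\theta,\eps)$ --- the disk analogue of the Nevanlinna angular characteristic --- together with its angular order
\[
\sigma(\theta,\eps,w):=\limsup_{r\to1-}\frac{\log \mathcal{S}(r,\theta,\eps,w)}{\log\frac{1}{1-r}}.
\]
By the first fundamental theorem one has $\overline{N}(r,\Delta(\theta,\eps),w=a)\le \mathcal{S}(r,\theta,\eps,w)+O(1)$ for every $a$, while a second fundamental theorem on $\widetilde{\Omega}$ --- valid for any $q$ distinct values up to $2\nu$ ramification-type exceptions --- gives
\[
(q-2\nu)\,\mathcal{S}(r,\theta,\eps,w)\le\sum_{j=1}^{q}\overline{N}(r,\Delta(\theta,\eps),w=a_j)+Q(r,\theta,\eps,w).
\]
Together these show that the order of the generic angular counting function is exactly $\sigma(\theta,\eps,w)$. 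Hence $L(\theta)$ is a Borel radius of $w$ if and only if $\sigma(\theta,\eps,w)=\rho(w)$ for every $\eps>0$, and similarly for $w'$, so the theorem will follow once I establish the two equalities $\rho(w)=\rho(w')$ and $\sigma(\theta,\eps,w)=\sigma(\theta,\eps,w')$ for all $\theta$ and all $\eps>0$.

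For the inequality $\sigma(\theta,\eps,w')\le\sigma(\theta,\eps,w)$ I would write $w'=w\cdot(w'/w)$ and apply the logarithmic derivative lemma in the angular domain, obtaining
\[
\mathcal{S}(r,\theta,\eps,w')\le 2\,\mathcal{S}(r,\theta,\eps',w)+Q(r,\theta,\eps',w),\qquad \eps'>\eps,
\]
with remainder $Q=O\!\left(\log^{+}\mathcal{S}(r,\theta,\eps',w)+\log\frac{1}{1-r}\right)$ off a thin exceptional set of radii. The reverse inequality $\sigma(\theta,\eps,w)\le\sigma(\theta,\eps,w')$ is the substantive one. Here I would use a Milloux-type inequality on $\widetilde{\Omega}$,
\[
\mathcal{S}(r,\theta,\eps,w)\le \overline{N}(r,\Delta(\theta,\eps),w=a_1)+\overline{N}(r,\Delta(\theta,\eps),w'=b_1)+\overline{N}(r,\Delta(\theta,\eps),w'=b_2)+Q,
\]
choosing $a_1$ among the at most $2\nu$ values deficient for $w$ along $L(\theta)$, so that the $w$-term has strictly smaller angular order and the two $w'$-terms must carry the full order $\sigma(\theta,\eps,w)$. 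Running this over a dense family of targets $b$ and keeping track of the ramification contributions yields precisely $2\nu$ admissible exceptions, identifying the exceptional set of $w'$ with that of $w$.

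The main obstacle is controlling the remainder $Q$ when $\rho(w)=\infty$, and this is exactly where $\rho_2(w)=0$ enters. In the finite-order setting of Theorem~\ref{thm1.1} the terms $\log^{+}\mathcal{S}$ and $\log\frac{1}{1-r}$ are dominated by a fixed power of $\frac{1}{1-r}$ and are absorbed automatically; when $\rho(w)=\infty$ no such polynomial bound is available a priori. Since $\mathcal{S}(r,\theta,\eps,w)$ is dominated by $T(r,w)$ up to a bounded factor, the hypothesis $\rho_2(w)=0$ yields the uniform estimate
\[
\log \mathcal{S}(r,\theta,\eps,w)\le\Big(\tfrac{1}{1-r}\Big)^{\eta}\qquad(r\to1-)
\]
for every $\eta>0$, so that $Q=O\big((\tfrac{1}{1-r})^{\eta}\big)$ grows more slowly than any positive power of $\frac{1}{1-r}$, whereas the order-$\infty$ main terms exceed every such power along a suitable sequence $r_n\to1-$. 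Thus in each $\limsup$ the remainder is negligible, both comparisons pass to the limit, and $\sigma(\theta,\eps,w)=\sigma(\theta,\eps,w')$. Applying the same two inequalities to the global characteristics (the full-disk case $\eps\to\frac{\pi}{2}$, using $T(r,w')\le 2T(r,w)+Q$ and the global Milloux inequality) gives $\rho(w')=\rho(w)=\infty$, so the two notions of Borel radius refer to the same target order, and $L(\theta)$ is a Borel radius of $w$ precisely when it is one of $w'$. Two points I would treat carefully are that every angular estimate must first be taken on a slightly wider sector $\eps'>\eps$ and then let $\eps'\downarrow\eps$, and that the exceptional set of radii produced by the logarithmic derivative lemma has finite logarithmic measure, so that it does not disturb the $\limsup$ as $r\to1-$.
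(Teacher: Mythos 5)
Your plan stands or falls on three pieces of angular value--distribution machinery for \emph{algebroid} functions that you assert but never prove: (i) a second fundamental theorem for the angular characteristic $\mathcal{S}(r,\theta,\varepsilon,w)$ on $\widetilde{\Omega}$ with at most $2\nu$ exceptional values and controllable remainder, (ii) a lemma of the logarithmic derivative in an angular domain, and (iii) an angular Milloux-type inequality bounding $\mathcal{S}(r,\theta,\varepsilon,w)$ by counting functions of $w'$. None of these is available in the sources the paper uses (He--Xiao's second fundamental theorem, its Lemma 2.4, is a full-disk statement), and (i) is not a routine transfer: the equivalence you build everything on --- ``$L(\theta)$ is a Borel radius of $w$ if and only if $\sigma(\theta,\varepsilon,w)=\rho(w)$ for every $\varepsilon$'' --- has an easy ``only if'' half, but its ``if'' half is essentially the statement the authors pose as an \emph{open question} in Section 5 (the converse of \eqref{5.1} for algebroid functions). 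Since your argument must apply this ``if'' half to $w'$ to conclude that $L(\theta)$ is a Borel radius of $w'$, you are assuming the open point, not proving the theorem. Your use of (iii) also has an internal flaw: you choose $a_1$ ``among the at most $2\nu$ values deficient for $w$ along $L(\theta)$,'' but a Borel radius need not have any exceptional value at all; when the exceptional set is empty, the term $\overline{N}(r,\Delta(\theta,\varepsilon),w=a_1)$ may itself carry the full order $\sigma(\theta,\varepsilon,w)$, and nothing then forces the two $w'$-terms to do so.

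The paper circumvents exactly these obstacles by never introducing an angular characteristic. It conformally transplants the sector $G(1,\theta,\eta)$ to the unit disk (Lemma \ref{01}), proves that $L(\theta)$ is a Borel radius of $w$ if and only if $M(\zeta)=w(z(\zeta))$ has order $\rho$ in the disk (Lemma \ref{03}, which only needs the disk second fundamental theorem plus the counting-function comparisons of Lemma \ref{01} and the exceptional-set Lemma \ref{02}), and then passes to the derivative inside the disk: $M'(\zeta)=w'(z(\zeta))z'(\zeta)$ with $T(t,z'(\zeta))=O\bigl(\log\frac{2}{1-t}\bigr)$, so Zeng's theorem $\rho(M)=\rho(M')$ (Lemma \ref{lem2.3}) transfers the order to $w'(z(\zeta))$, and Lemma \ref{03} applied to $w'$ finishes. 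Your observation about the role of $\rho_2(w)=0$ (making $\log T$-type remainders of order zero, hence negligible against main terms of infinite order) is correct and mirrors the paper's use of it, but to make your route rigorous you would first have to establish (i)--(iii) for $\nu$-valued algebroid functions in angular domains, each of which is a substantial, and in the case of (i) explicitly open, problem.
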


We will prove the above two theorems synchronously.

\section{Primary knowledge}

\begin{lem}
Let $w(z)$ be the $\nu$-valued algebroid function defined by
\eqref{1.1} in the unit disk, $z=z(\zeta)$ be a conformal mapping
from the unit disk $D(\zeta)$ into $D(z)$.  Then
$M(\zeta)=w(z(\zeta))$ and $M'(\zeta)$ are also $\nu$-valued
algebroid functions. Furthermore, we can see that
$G(\zeta)=w(z(\zeta))$ is determined by
\begin{equation*}
A_\nu(z(\zeta))M^\nu(\zeta)+A_{\nu-1}(z(\zeta))M^{\nu-1}(\zeta)+\cdots+A_0(z(\zeta))=0,
\end{equation*}
and $M'(\zeta)=w'(z(\zeta))z'(\zeta)$.
\end{lem}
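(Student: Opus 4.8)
The plan is to transport the defining data of $w$ through the conformal substitution $z=z(\zeta)$, handling the two assertions (that $M$ and $M'$ are $\nu$-valued algebroid) by a single principle: since a conformal map is a biholomorphism, the pullback $f\mapsto f\circ z$ is a field isomorphism between the meromorphic functions on the image disk and those on $D(\zeta)$, and such an isomorphism preserves exactly the structures that determine the valence.

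First I would substitute $z=z(\zeta)$ into \eqref{1.1}. A conformal map is holomorphic, so each coefficient $B_j(\zeta):=A_j(z(\zeta))$ is analytic on $D(\zeta)$, and every branch $w_j$ of $w$ produces a branch $M_j(\zeta)=w_j(z(\zeta))$ of $M$; substituting, these satisfy $\sum_{i=0}^\nu B_i(\zeta)M^i(\zeta)=0$, which is precisely the displayed equation. The only nontrivial point for the first assertion is then that this equation genuinely defines a $\nu$-valued (and not a lower-valued) algebroid function. This follows from biholomorphy in two steps: because $z(\zeta)$ is injective, the $B_j$ can vanish simultaneously at $\zeta_0$ only if the $A_j$ vanish simultaneously at $z(\zeta_0)$, which \eqref{1.1} forbids, so the $B_j$ have no common zero; and because the pullback preserves irreducible factorization over the meromorphic functions, irreducibility of $A_\nu w^\nu+\cdots+A_0$ passes to irreducibility of $B_\nu M^\nu+\cdots+B_0$. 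Hence $M$ is $\nu$-valued algebroid.

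For the derivative, differentiating each branch by the chain rule gives $M_j'(\zeta)=w_j'(z(\zeta))z'(\zeta)$, hence $M'(\zeta)=w'(z(\zeta))z'(\zeta)$ as claimed. To see that $M'$ is $\nu$-valued algebroid I would first record the classical fact that $w'(z)$ is itself $\nu$-valued algebroid: writing $P(z,w)=\sum_i A_i(z)w^i$ and differentiating \eqref{1.1} implicitly yields $w'=-P_z(z,w)/P_w(z,w)$, a well-defined element of the degree-$\nu$ extension $\mathcal{M}(D(z))(w)$, where $P_w\not\equiv0$ since irreducibility of $P$ prevents $P$ and $P_w$ from sharing a common factor. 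Composition with the conformal map (as in the previous step), together with multiplication by the single-valued analytic factor $z'(\zeta)$, changes neither the number of branches nor irreducibility, so $M'$ is again $\nu$-valued algebroid.

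The main obstacle is not the formal manipulation but the control of the \emph{valence}: one must ensure that neither the substitution nor the differentiation collapses $w$ to fewer than $\nu$ branches. Both points reduce to the irreducibility of \eqref{1.1}—the substitution because biholomorphy preserves irreducible factorization, and the derivative because irreducibility forces $P_w\not\equiv0$ and keeps $w'$ a generator of the full extension $\mathcal{M}(D(z))(w)$ rather than of a proper subfield. I would therefore organize the argument around the biholomorphic pullback as a field isomorphism and reduce both claims to that single fact, citing the preservation of valence under differentiation from the standard theory of algebroid functions.
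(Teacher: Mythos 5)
The paper offers no proof of this lemma---it simply declares it ``apparent''---so the only question is whether your argument is sound, and its central organizing principle fails in exactly the case the paper needs. You claim that ``the pullback preserves irreducible factorization, so irreducibility of $A_\nu w^\nu+\cdots+A_0$ passes to irreducibility of $B_\nu M^\nu+\cdots+B_0$.'' But $z=z(\zeta)$ maps $D(\zeta)$ \emph{into} $D(z)$, not onto it; in the paper's application (Lemma 2.2) the image is the sector $G(1,\theta,\eta)$, a proper subdomain. Your pullback is indeed a field isomorphism, but between $\mathcal{M}\bigl(z(D(\zeta))\bigr)$ and $\mathcal{M}\bigl(D(\zeta)\bigr)$, whereas the irreducibility you are given is over $\mathcal{M}\bigl(D(z)\bigr)$; passing from the latter to the former is restriction to a subdomain, which does \emph{not} preserve irreducibility (restriction shrinks the monodromy group, and irreducibility is exactly transitivity of the monodromy action on the roots). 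A counterexample of precisely the relevant type: $w^2-z=0$ is irreducible over the unit disk, since the loop around the origin interchanges the two roots; but $G(1,\theta,\eta)=\{z: 0<|z|<1,\ |\arg z-\theta|<\eta\}$ is simply connected and omits the origin, so $\sqrt{z}$ has two single-valued holomorphic branches there, and the pulled-back equation factors as $\bigl(M-\sqrt{z(\zeta)}\,\bigr)\bigl(M+\sqrt{z(\zeta)}\,\bigr)$ over $\mathcal{M}(D(\zeta))$. Hence $M$ need not be $\nu$-valued algebroid in the strict, irreducible-equation sense of \eqref{1.1}: it can split into algebroid functions of smaller valence.

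The lemma survives (and this is presumably why the authors call it apparent) only if ``$\nu$-valued algebroid function'' is read in the loose sense: a function determined by a degree-$\nu$ equation whose analytic coefficients have no common zero, irreducible or not. Everything the paper subsequently does with $M$---the counting functions $\overline{N}(\gamma,M=a)$, the characteristic $T(\gamma,M)$, the second fundamental theorem---is defined branchwise and remains valid for reducible defining equations (apply the theorems to the irreducible factors and sum). Under that reading, your substitution step, the no-common-zero observation (where injectivity of $z(\zeta)$ plays no role: evaluation at $z(\zeta_0)$ already suffices), and the chain-rule identity $M_j'(\zeta)=w_j'(z(\zeta))z'(\zeta)$ prove everything that is actually claimed. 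For the valence of the derivative, rely on the citation in your last sentence---Valiron/Zeng's theorem, the paper's Lemma 2.4, applied to each irreducible component---and drop your sketch that irreducibility ``keeps $w'$ a generator of the full extension'': that assertion is the entire nontrivial content of that theorem and does not follow from $P_w\not\equiv 0$ alone. In short: delete the irreducibility-transfer step, which is false, and interpret valence in the loose sense; the rest of your argument then stands.
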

Lemma 2.1 is apparent and we omit the proof of it. The following
lemma is an analogue of Lemma 2.1 in \cite{Zhang}.
\begin{lem}\label{01}
Set
$$G(r,\theta,\eta)=\{z:0<|z|<r,|\arg z-\theta|<\eta\},$$
$$\alpha=\frac{\pi}{2\eta},$$
$$\zeta(z)=\frac{(ze^{-i\theta})^{2\alpha}+2(ze^{-i\theta})^\alpha-1}{(ze^{-i\theta})^{2\alpha}-2(ze^{-i\theta})^\alpha-1}.$$
The function $\zeta=\zeta(z)$ defined above maps conformally the
unit disk $D(\zeta)=\{\zeta:|\zeta|<1\}$ onto the sector
$G(1,\theta,\eta)$. By $z=z(\zeta)$ we denote the inverse function
of the function $\zeta(z)$. Write $M(\zeta)=w(z(\zeta))$, where
$w(z)$ is a $\nu-$valued algebroid function in the sector
$G(1,\theta,\eta)$. Then for any value $a$ on the complex plane, we
have

(1) Set $\beta=2^{-\alpha-\frac{5}{2}}$. Then
$$\overline{N}(r,\Delta(\theta,\frac{\eta}{2}),w=a)\leq\frac{2}{\beta}\overline{N}(1-\beta(1-r), M=a)+O(1),$$
when $r\rightarrow1-$.

(2) Set $\delta=\frac{1}{16\alpha}$. Then
$$\overline{N}(\gamma,M=a)\leq\frac{2}{\delta}\overline{N}(1-\delta(1-\gamma),\Delta(\theta,\eta), w=a)+O(1),$$
when $\gamma\rightarrow1-$.

(3) For any $0<t<1$, we have
\begin{equation}\label{05}
T(t,z'(\zeta))\leq3\log\frac{2}{1-t},\ \
T(t,\frac{1}{z'(\zeta)})\leq3\log\frac{2}{1-t}+\log\frac{\pi}{\eta}.
\end{equation}
\end{lem}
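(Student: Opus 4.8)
The three assertions split into two essentially independent tasks. Parts (1) and (2) are bookkeeping statements about $a$-points transported across the conformal map $\zeta\mapsto z(\zeta)$, while part (3) is a Nevanlinna estimate for the univalent map $z(\zeta)$ and its reciprocal. The plan is to exploit that $M(\zeta)=w(z(\zeta))$ forces an exact bijection between the $a$-points of $w$ in the sector and those of $M$ in $D(\zeta)$: lifting $\zeta(z)$ to the $\nu$-sheeted covering, each $a$-point $\widetilde{z}$ of $w$ corresponds to exactly one $a$-point of $M$ over $\zeta(z)$, so $\overline{n}$ is literally preserved. The content of (1) and (2) is therefore a comparison of the \emph{radii} at which these points are counted, followed by an integration that upgrades a pointwise radius inequality to the stated inequality between the integrated functions $\overline{N}$.

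The technical core is the boundary distortion of the explicit map. Writing $u=(ze^{-i\theta})^{\alpha}$, the sector $G(1,\theta,\eta)$ goes first to the right half of the unit disk (since $|\arg u|<\alpha\eta=\pi/2$) and then, via the Möbius-type expression, onto $D(\zeta)$; inverting, $z(\zeta)$ is univalent with image the bounded sector. For part (1) I would prove $1-|\zeta(z)|\ge\beta(1-|z|)$ for $z$ in the narrow sector, with $\beta=2^{-\alpha-5/2}$, directly from the formula: restricting to half-angle $\eta/2$ keeps $\arg u$ inside $|\arg u|\le\pi/4$, so the denominator $u^{2}-2u-1$ stays bounded away from $0$, and a Taylor expansion near the arc $|u|=1$ shows $1-|\zeta|$ dominates a fixed multiple of $1-|z|$; tracking the worst case over the sector yields the admissible (exponentially small) $\beta$. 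For part (2) I would prove the companion estimate $1-|z(\zeta)|\ge\delta(1-|\zeta|)$ with $\delta=1/(16\alpha)$ from the same formula read backwards, where the exponent $1/\alpha$ in $z=e^{i\theta}u^{1/\alpha}$ contracts distances near the arc by a factor of order $\alpha$.

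Granting these comparisons, (1) and (2) follow by integrating. Using $\overline{n}(t,\Delta(\theta,\eta/2),w=a)\le\overline{n}(1-\beta(1-t),M=a)$ in the integral defining $\overline{N}$, the substitution $u=1-\beta(1-t)$ contributes the Jacobian $1/\beta$; bounding $1/t\le 2$ for $t$ near $1$ and using $\overline{n}(u)\le\overline{n}(u)/u$ for $u<1$ collapses the right side to $(2/\beta)\,\overline{N}(1-\beta(1-r),M=a)$ up to an $O(1)$ from the lower limit and the $\overline{n}(0)$ terms, which is exactly (1). The argument for (2) runs in the opposite direction, bounding the $a$-points of $M$ by those of $w$ via $1-|z(\zeta)|\ge\delta(1-|\zeta|)$, and is otherwise identical with $\delta$ in place of $\beta$ and the wide sector $\Delta(\theta,\eta)$ in place of $\Delta(\theta,\eta/2)$.

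For part (3) the key observation is that, being the derivative of a conformal map, $z'(\zeta)$ is holomorphic and zero-free on $D(\zeta)$; hence $z'$ and $1/z'$ are both holomorphic single-valued functions, so $N(t,z')=N(t,1/z')=0$ and $T(t,\cdot)=m(t,\cdot)+O(1)$ reduces to the ordinary proximity term. The Koebe distortion bounds $\frac{1-|\zeta|}{(1+|\zeta|)^{3}}|z'(0)|\le|z'(\zeta)|\le\frac{1+|\zeta|}{(1-|\zeta|)^{3}}|z'(0)|$ then give $\log^{+}|z'(te^{i\psi})|\le 3\log\frac1{1-t}+O(1)$ and $\log^{+}\frac1{|z'(te^{i\psi})|}\le\log\frac1{1-t}+\log\frac1{|z'(0)|}+O(1)$; integrating in $\psi$ yields $T(t,z')\le 3\log\frac2{1-t}$ at once, while for the reciprocal the conformal radius $|z'(0)|$ of the sector at its centre is of order $\eta$ (the factor $1/\alpha=2\eta/\pi$ in $z=e^{i\theta}u^{1/\alpha}$), so $\log\frac1{|z'(0)|}$ is absorbed into the stated $\log\frac{\pi}{\eta}$. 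The main obstacle throughout is the distortion analysis of the second paragraph: extracting the precise, angle-uniform constants $\beta$ and $\delta$ from the explicit $\zeta(z)$ near the boundary arc is where the genuine estimation lies, whereas the integrations in (1), (2) and the Nevanlinna bounds in (3) are routine once those constants are secured.
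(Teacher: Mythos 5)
Your overall architecture coincides with what the paper actually does: the paper gives no self-contained proof of this lemma, but rests it on Zhang's two inclusions \eqref{2.5} and \eqref{2.6} together with the conformal invariance of the (distinct) root count on the lifted covering surface. Your pointwise estimates $1-|\zeta(z)|\ge\beta(1-|z|)$ and $1-|z(\zeta)|\ge\delta(1-|\zeta|)$ are precisely those inclusions restated, your bijection-of-$a$-points remark is the same invariance statement, and the integration step you describe (substitution $u=1-\beta(1-t)$, Jacobian $1/\beta$, the factor $2$ from $1/t\le 2$) is the routine bridge from $\overline{n}$ to $\overline{N}$ that the paper leaves implicit. Your part (3), via zero-freeness of $z'$ plus the Koebe distortion theorem and Schwarz--Pick (giving $|z'(0)|\le 1$ for the upper bound and $|z'(0)|\ge\eta/\pi$ for the reciprocal), is a legitimate proof of \eqref{05}; the paper omits this part entirely, inheriting it from \cite{Zhang1}, so there you supply something genuinely absent from the text.

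One claim, as you state it, is false and would block the ``technical core'' you propose: the inequality $1-|\zeta(z)|\ge\beta(1-|z|)$ cannot hold on the whole narrow sector. The vertex $z=0$ is a boundary point of $G(1,\theta,\eta)$, and $\zeta(z)\to 1$ as $z\to 0$ (with $u=(ze^{-i\theta})^{\alpha}\to 0$ one gets $\zeta\approx 1-4u$), so near the vertex the left-hand side tends to $0$ while the right-hand side tends to $\beta>0$. This is exactly why Zhang's inclusion \eqref{2.5} is stated only on the annular part $\frac12<|z|<r$ of the sector. The repair is cheap and consistent with your own integration: prove the distortion inequality only on $\{\frac12<|z|<1,\ |\arg z-\theta|<\frac{\eta}{2}\}$, and absorb the finitely many $a$-points lying over $|z|\le\frac12$ (a number independent of $r$) into the $O(1)$ term --- which is where your ``$O(1)$ from the lower limit'' already lives. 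With that restriction inserted your outline is correct, though the actual extraction of the admissible constants $\beta=2^{-\alpha-5/2}$ and $\delta=\frac{1}{16\alpha}$ remains a sketch (``Taylor expansion near the arc''), i.e., it stays at the same level of detail at which the paper simply cites \cite{Zhang1}.
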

Here we generalize the corresponding results of meromorphic
functions to algebroid functions. This lemma for meromorphic
functions was first established by Zhang in \cite{Zhang1}. He proved
that the function $\zeta=\zeta(z)$ maps the unit disk
$D(\zeta)=\{\zeta:|\zeta|<1\}$ onto the sector $G(1,\theta,\eta)$
conformally. Furthermore, after a calculation Zhang found that this
function has the following perfect properties:
\begin{equation}\label{2.5}\zeta(\{z: \frac{1}{2}<|z|<r, |\arg
z-\theta|<\frac{\eta}{2}\})\subset \{\zeta:
|\zeta|<1-2^{-\frac{\pi}{2\eta}-\frac{\pi}{2}}(1-r)\} \end{equation}
and \begin{equation}\label{2.6} z(\{\zeta: |\zeta|<\gamma\})\subset
\{z:|z|<1-\frac{\eta}{8\pi}(1-\gamma), |\arg z-\theta|<\eta\}.
\end{equation} This is important. The number of roots of algebroid
functions or meromorphic functions are conformal invariant
consequently he obtained this result.

\textbf{Remark.} As we know that the term $T(r,\Omega,f)$, whose
definition can be seen in Page 233 of \cite{Tsuji} is conformal
invariant, where $f$ is a meromorphic function in the angular domain
$G(1,\theta,\eta)$. By \eqref{2.5} and \eqref{2.6} we have the
following
$$T(r,\Delta(\theta,\frac{\eta}{2}),f(z))\leq T(1-2^{-\frac{\pi}{2\eta}-\frac{\pi}{2}}(1-r),f(z(\zeta)))$$
and
$$T(\gamma, f(z))\leq T(1-\frac{\eta}{8\pi}(1-\gamma),\Delta(\theta,\eta),f(z(\zeta))).$$
From the above we can see that the order of $T(r,f(z(\zeta)))$ is
$\rho$ in the unit disk if and only if there exists a $\varepsilon$
such that
\begin{equation}\label{2.4}
\limsup\limits_{r\rightarrow1-}\frac{\log
T(r,\Delta(\theta,\varepsilon),f)}{\log\frac{1}{1-r}}=\rho.
\end{equation}
Since $L(\theta)$ is a Borel radius of a meromorphic function $f$ in
the unit disk if and only if there exists a $\varepsilon$ such that
\eqref{2.4} holds. Therefore, we can simplify the Zhang's proof for
$L(\theta)$ is a Borel radius if and only if the order of
$T(r,f(z(\zeta)))$ is $\rho$.

\begin{lem}\label{02}
Let $h(r)$ is a real non-negative and non-decreasing function
defined in $(0,1)$, $E\subset(0,1)$ is a set with
$\int_E\frac{1}{1-r}dr<\infty$. If
\begin{equation}\label{2.2}
\limsup\limits_{r\rightarrow1-}\frac{\log
h(r)}{\log\frac{1}{1-r}}=\rho,
\end{equation}
then we have \begin{equation}\label{2.3}\limsup\limits_{r\notin
E,r\rightarrow1-}\frac{\log h(r)}{\log\frac{1}{1-r}}=\rho.
\end{equation}
\end{lem}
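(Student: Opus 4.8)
The plan is to establish the two matching bounds for the restricted upper limit in \eqref{2.3}; write $B$ for its value. One direction is free: since $\{r\in(0,1):r\notin E\}\subset(0,1)$, restricting the limit to a smaller set cannot increase it, so \eqref{2.2} gives $B\le\rho$ at once. Everything then reduces to showing $B\ge\rho$, and the mechanism is that the hypothesis $\int_E\frac{1}{1-r}dr<\infty$ renders $E$ asymptotically negligible in the logarithmic scale that governs the unit disk near its boundary.

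The first step I would record is a one-line measure computation: for $0<s<1$ the interval $[s,\frac{1+s}{2}]$ satisfies
$$\int_s^{\frac{1+s}{2}}\frac{dr}{1-r}=\log(1-s)-\log\frac{1-s}{2}=\log 2,$$
a fixed positive number independent of $s$. On the other hand, convergence of $\int_E\frac{1}{1-r}dr$ forces its tail $\int_{E\cap(s,1)}\frac{1}{1-r}dr$ to tend to $0$ as $s\to1^-$. Consequently, once $s$ is close enough to $1$ that this tail is smaller than $\log 2$, the set $E$ cannot cover all of $[s,\frac{1+s}{2}]$, and there must exist a point $r\in[s,\frac{1+s}{2}]$ with $r\notin E$.

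Next I would pick a sequence $s_n\to1^-$ along which the ratio in \eqref{2.2} converges to $\rho$, and for each large $n$ choose $r_n\in[s_n,\frac{1+s_n}{2}]\setminus E$ as just produced. Two elementary comparisons transfer the growth from $s_n$ to $r_n$: monotonicity of $h$ with $r_n\ge s_n$ gives $h(r_n)\ge h(s_n)$, while $r_n\le\frac{1+s_n}{2}$ gives $1-r_n\ge\frac{1-s_n}{2}$, hence $\log\frac{1}{1-r_n}\le\log 2+\log\frac{1}{1-s_n}$. Since $\rho\ge0$ and $\log\frac{1}{1-s_n}\to\infty$, we have $\log h(s_n)\ge0$ for all large $n$, so
$$\frac{\log h(r_n)}{\log\frac{1}{1-r_n}}\ge\frac{\log h(s_n)}{\log 2+\log\frac{1}{1-s_n}}=\frac{\log h(s_n)}{\log\frac{1}{1-s_n}}\cdot\frac{\log\frac{1}{1-s_n}}{\log 2+\log\frac{1}{1-s_n}}\longrightarrow\rho.$$
As $r_n\to1^-$ with $r_n\notin E$, this yields $B\ge\rho$ and closes the argument; the same chain gives $B=\infty$ when $\rho=\infty$.

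The only delicate point, and where I would spend the care, is the calibration of the comparison interval. It must have a fixed logarithmic length $\log 2>0$ so that the vanishing tail of $E$ cannot eventually swallow it, yet it must be short enough in the ordinary scale that $1-r_n$ and $1-s_n$ differ only by the bounded factor $2$; this is exactly what makes the correction $\log 2$ negligible against $\log\frac{1}{1-s_n}\to\infty$ and lets the two denominators match in the limit. I would also remark that the assumption $\log h(s_n)\ge0$ used above can fail only in the degenerate regime $\rho\le0$, which is already covered by the trivial inequality $B\le\rho$.
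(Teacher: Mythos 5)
Your proposal is correct and is essentially the paper's own argument: the paper likewise produces a point $r_n'\notin E$ in an interval $[r_n,\lambda r_n+(1-\lambda)]$ of fixed logarithmic length $\log\frac{1}{\lambda}$ chosen to exceed $\int_E\frac{dr}{1-r}$, then uses monotonicity of $h$ and the bound $\log\frac{1}{1-r_n'}\leq\log\frac{1}{\lambda}+\log\frac{1}{1-r_n}$, which is exactly your construction with $\lambda=\frac{1}{2}$ (your use of the vanishing tail of the integral in place of the full mass, and your direct phrasing in place of the paper's contradiction, are only cosmetic differences). One small caveat: your dismissal of the case $\rho=0$ via the trivial inequality $B\leq\rho$ is incomplete, since one still needs $B\geq 0$ --- which follows in one line because $\log h(r)\geq\log h(r_0)$ for $r\geq r_0$ with $h(r_0)>0$, so the restricted ratio has nonnegative lower limit --- but the paper is equally casual there, asserting only that this case ``naturally holds.''
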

Now we give the proof of Lemma \ref{02}.
\begin{proof}
If $\rho=0$, it is easy to see that the conclusion naturally holds.
Here we only consider the case $0<\rho\leq\infty$.

We choose a $0<\lambda<1$ such that
$$\log \frac{1}{\lambda}>K_E,$$ where $K_E=\int_E\frac{dr}{1-r}<\infty$.
Suppose \eqref{2.3} is not true, then there exists a number
$0<\rho_1<\rho$, such that $$\limsup\limits_{r\notin
E,r\rightarrow1^{-}}\frac{\log
h(r)}{\log\frac{1}{1-r}}=\rho_1<\rho.$$ From \eqref{2.2}, we can
take a sequence $\{r_n\}\subset(r_0,1)$ with $r_n\rightarrow1-$ such
that
\begin{equation}\label{set01+}
\limsup\limits_{n\rightarrow\infty}\frac{\log
h(r_n)}{\log\frac{1}{1-r_n}}=\rho.
\end{equation} Since for each $n$
\begin{equation*}
\begin{split}
\int_{[r_n,\lambda r_n+(1-\lambda)]\backslash
E}\frac{dr}{1-r}&\geq\int_{[r_n,\lambda
r_n+(1-\lambda)]}\frac{dr}{1-r}-\int_E\frac{dr}{1-r}\\
&=\log\frac{1}{\lambda}-K_E>0,
\end{split}
\end{equation*}
there exists a $r_n'\in[r_n,\lambda r_n+(1-\lambda)]\backslash E$.
By the increasing property of $\log h(r)$, we have
$$\frac{\log h(r_n')}{\log\frac{1}{1-r_n'}}\geq\frac{\log h(r_n)}{\log\frac{1}{\lambda(1-r_n)}}
=\frac{\log h(r_n)}{\log\frac{1}{\lambda}+\log\frac{1}{1-r_n}},$$
and then we have
\begin{equation*}
\begin{split}
\limsup\limits_{n\rightarrow\infty}\frac{\log
h(r_n)}{\log\frac{1}{1-r_n}}&
=\limsup\limits_{n\rightarrow\infty}\frac{\log h(r_n)}{\log\frac{1}{\lambda}+\log\frac{1}{1-r_n}}\\
&\leq \limsup\limits_{r_n'\rightarrow1-}
\frac{\log h(r_n')}{\log\frac{1}{1-r_n'}}\\
&\leq \limsup\limits_{r\rightarrow1-,\ r\in[r_0,1]\backslash
E}\frac{\log h(r)}{\log\frac{1}{1-r}}=\rho_1<\rho.
\end{split}
\end{equation*}
This contradicts to (\ref{set01+}). Our Lemma is confirmed.
\end{proof}
In 1988, Zeng \cite{Zeng} established the following lemma which is a
classical result for algebroid functions and is useful for our
study.
\begin{lem}\cite{Zeng}\label{lem2.3}
Let $w(z)$ be the $\nu$-valued algebroid function defined by (1.1),
then $w'(z)$ is also a $\nu$-valued algebroid function in the unit
disk and $\rho(w)=\rho(w')$.
\end{lem}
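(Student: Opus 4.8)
The plan is to prove the two assertions separately. For the first, I would show that $w'$ is algebroid of degree exactly $\nu$ by an elimination-plus-monodromy argument. Differentiating \eqref{1.1} gives $\sum_k A_k'(z)w^k + w'\sum_k k A_k(z) w^{k-1}=0$, so $w'$ is a rational function of $w$ whose coefficients are analytic (polynomials in the $A_k$ and $A_k'$). Eliminating $w$ between this relation and \eqref{1.1} by a resultant produces a polynomial equation for $w'$ with analytic coefficients and degree at most $\nu$; hence $w'$ is algebroid with at most $\nu$ branches. To see the degree is exactly $\nu$, I would argue that the $\nu$ branches $w_1,\dots,w_\nu$ of $w$ have pairwise distinct derivatives. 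Indeed, if $w_j'\equiv w_k'$ with $j\ne k$, then $w_j-w_k\equiv c$ for a nonzero constant $c$, so $w_k$ is a common root of the irreducible polynomial $P(z,w)=\sum_k A_k w^k$ and of its shift $P(z,w+c)$; both being irreducible of degree $\nu$ and sharing a root, they coincide up to a factor in the coefficient field, comparison of leading coefficients makes that factor $1$, and then the coefficient of $w^{\nu-1}$ forces $c=0$, a contradiction. Since analytic continuation commutes with differentiation, the $w_j'$ are precisely the conjugates of $w'$ over the coefficient field, so their distinctness gives that $w'$ genuinely has $\nu$ branches.

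For the order equality I would compare characteristic functions in both directions. The inequality $\rho(w')\le\rho(w)$ is the routine direction: from $m(r,w')\le m(r,w)+m(r,w'/w)$ and the lemma on the logarithmic derivative for algebroid functions, $m(r,w'/w)=O\big(\log^+ T(r,w)+\log\frac{1}{1-r}\big)$ for $r$ outside an exceptional set $E$ with $\int_E\frac{dr}{1-r}<\infty$, while the poles of $w'$ lie over the poles of $w$ (with order raised by one) and over the ramification points, giving $N(r,w')\le 2N(r,w)+O(T(r,w))$. Hence $T(r,w')\le C\,T(r,w)+O\big(\log^+ T(r,w)+\log\frac{1}{1-r}\big)$, and taking $\limsup$ of $\log(\cdot)/\log\frac{1}{1-r}$ together with Lemma \ref{02} to delete $E$ yields $\rho(w')\le\rho(w)$.

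The reverse inequality $\rho(w)\le\rho(w')$ is the crux and the step I expect to be the main obstacle. A naive attempt via $m(r,w)\le m(r,w')+m(r,w/w')$ with $m(r,w/w')\le T(r,w'/w)+O(1)$ is circular, because $N(r,w'/w)$ already contains $\overline N(r,0,w)$, which can be as large as $T(r,w)$, and the second main theorem shows this cannot be dodged by changing the base value. I would instead establish a radius-shifted estimate $T(r,w)\le C\,T(R,w')+O\big(\log\frac{1}{R-r}+\log\frac{1}{1-r}\big)$ for $r<R<1$, coming from the disk form of the logarithmic-derivative lemma (with its $\log\frac{1}{R-r}$ loss) combined with the first main theorem, and then choose $R=\frac{1+r}{2}$ so that $R-r$ and $1-R$ are both comparable to $1-r$ and $\log\frac{1}{1-R}=\log\frac{1}{1-r}+O(1)$; this preserves the order and gives $\rho(w)\le\rho(w')$. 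The two genuine difficulties are exactly this reverse bound, namely controlling $T(r,w)$ by $T(r,w')$ without circularity, and the algebroid-specific accounting of the ramification contributions to $N(r,w')$ that has no counterpart in the meromorphic case. Once both inequalities hold, Lemma \ref{02} removes the exceptional sets and the two $\limsup$'s coincide, giving $\rho(w)=\rho(w')$.
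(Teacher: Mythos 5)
The paper itself gives no proof of this lemma --- it is imported verbatim from Zeng \cite{Zeng} --- so your proposal has to stand on its own merits. Two of your three steps do: the elimination-plus-monodromy argument that $w'$ is exactly $\nu$-valued is correct (the shift argument $P(z,w)\equiv P(z,w+c)\Rightarrow \nu cA_\nu\equiv 0\Rightarrow c=0$ does force the branches $w_1',\dots,w_\nu'$ to be pairwise distinct, and distinctness of the conjugates gives degree exactly $\nu$), and the direction $\rho(w')\le\rho(w)$ goes through as you sketch it, granting the algebroid logarithmic-derivative lemma and the standard bound of the ramification term by $(2\nu-2)T(r,w)+S(r,w)$.

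The gap is exactly where you yourself locate the crux, and your proposed fix does not close it. You correctly diagnose that $m(r,w)\le m(r,w')+m(r,w/w')$ is circular, because estimating $m(r,w/w')$ via the first main theorem reintroduces $\overline{N}(r,w=0)$, which may be comparable to $T(r,w)$. But the ``radius-shifted estimate'' $T(r,w)\le C\,T(R,w')+O(\log\frac{1}{R-r}+\log\frac{1}{1-r})$ that you then invoke is claimed to follow from precisely the same two tools (log-derivative lemma plus first main theorem) whose combination you just showed to be circular; shifting the radius only changes the error term of the log-derivative lemma, it does not change which counting functions appear. The missing idea --- Chuang's cancellation, which is also the engine of Zeng's proof --- is this: by the first main theorem, $m(r,w/w')=T(r,w'/w)-N(r,w=0,w/w')+O(1)$, and \emph{every} zero of $w$, of any multiplicity, including zeros located at branch points (as a Puiseux expansion shows), is a zero of $w/w'$; hence $\overline{N}(r,w=0)\le N(r,w=0,w/w')$ and the dangerous term is absorbed rather than thrown away, leaving $T(r,w)\le T(r,w')+m(r,w'/w)+\overline{N}(r,w=\infty)+N_1(r)+O(1)$, where $N_1(r)$ is the ramification term. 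One then needs two further algebroid-specific observations: $\overline{N}(r,w=\infty)\le N(r,w'=\infty)\le T(r,w')$ since poles of $w$ are poles of $w'$, and $N_1(r)\le(2\nu-2)T(r,w')+S(r,w')$ by applying the second main theorem to $w'$ rather than to $w$ (legitimate because $w$ and $w'$ determine the same Riemann surface, by your own first step), since the bound in terms of $T(r,w)$ would again be circular. After that, $m(r,w'/w)=O(\log^+T(r,w)+\log\frac{1}{1-r})$ off an exceptional set $E$ lets you absorb the logarithmic term into the left-hand side, and Lemma \ref{02} removes $E$; no radius shift is needed at all. Without the cancellation and the $w'$-side ramification bound, your central inequality remains an assertion, not a proof.
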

The following lemma is the second fundamental theorem for algebroid
functions in the unit disk, whose proof can be seen in \cite{He},
and we can obtain the error term $S(r,w)$ by the same method as used
in meromorphic functions.
\begin{lem}
Let $w(z)$ be a $\nu-$valued algebroid function in the unit disk,
and $a_1,a_2,\cdots,a_{q}$ be $q$ different values on the complex
sphere, then we have
$$(q-2\nu)T(r,w)<\sum\limits_{i=1}^q\overline{N}(r,w=a_i)+S(r,w),$$
where
\begin{equation*}
S(r,w)=\begin{cases} O(\log\frac{1}{1-r}) &\text{,if $\lambda(w)<\infty$},\\
O(\log\frac{1}{1-r}+\log T(r,w)),r\notin E & \text{,if
$\lambda(w)=\infty$}.\end{cases}
\end{equation*} where $E$ is a set such that $E\subset(0,1)$ and
$\int_E\frac{1}{1-r}dr<\infty$.

In general, we can write the second fundamental theorem as follows
$$(q-2\nu)T(r,w)<\sum\limits_{i=1}^q\overline{N}(r,w=a_i)+O(\log\frac{1}{1-r}+\log T(r,w)),r\notin E.$$
\end{lem}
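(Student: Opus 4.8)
The plan is to run Selberg's branch-by-branch derivation of the second fundamental theorem (as in \cite{He}), averaged over the $\nu$ sheets, while keeping careful track of the remainder so as to extract the stated form of $S(r,w)$. After a preliminary M\"obius change of variable we may assume the $q$ values $a_1,\dots,a_q$ are finite. For each sheet $w_j$ we form the auxiliary sum $\sum_{i=1}^{q}\frac{1}{w_j-a_i}$ and use the elementary fact that near any one $a_k$ the corresponding term dominates; this gives the pointwise bound $\sum_{i=1}^{q}\log^{+}\frac{1}{|w_j-a_i|}\le\log^{+}\bigl|\sum_{i=1}^{q}\frac{1}{w_j-a_i}\bigr|+O(1)$. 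Summing over $j$ and integrating over $|\widetilde{z}|=r$ yields $\sum_{i=1}^{q}m(r,a_i)\le m\bigl(r,\sum_{i}\frac{1}{w-a_i}\bigr)+O(1)$.

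Next we factor $\sum_{i}\frac{1}{w-a_i}=\frac{1}{w'}\sum_{i}\frac{w'}{w-a_i}$, so that the proximity function is bounded by $m(r,1/w')+\sum_{i}m\bigl(r,\frac{w'}{w-a_i}\bigr)+O(1)$. Here $m(r,1/w')$ is controlled through the first fundamental theorem together with $T(r,w')\le T(r,w)+\overline{N}(r,w=\infty)+S(r,w)$, while each $m\bigl(r,\frac{w'}{w-a_i}\bigr)$ is a logarithmic derivative term. Feeding in $m(r,a_i)=T(r,w)-N(r,a_i)+O(1)$ and absorbing the ramification of the $\nu$-sheeted covering --- the branch points, counted through the discriminant, are controlled by a Riemann--Hurwitz type bound that turns the meromorphic constant $2$ into $2\nu$ --- produces the main inequality $(q-2\nu)T(r,w)<\sum_{i}\overline{N}(r,w=a_i)+S(r,w)$, in which $S(r,w)$ is a finite sum of logarithmic derivative proximity functions plus $O(1)$. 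This portion is essentially \cite{He}; only the counting that replaces $2$ by $2\nu$ need be reproduced.

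The heart of the matter is therefore the logarithmic derivative lemma in the unit disk, namely that each $m\bigl(r,\frac{w'}{w-a_i}\bigr)$, and hence $S(r,w)$, is $O\bigl(\log^{+}T(r,w)+\log\frac{1}{1-r}\bigr)$ for $r\notin E$ with $\int_{E}\frac{dr}{1-r}<\infty$. I would establish this by applying the meromorphic disk version of the estimate on each branch of $w$ and averaging; for auxiliary radii $r<\rho<1$ this introduces terms of the type $\log\frac{1}{\rho-r}$ and $\log\frac{1}{1-r}$, and an appropriate choice (for instance $1-\rho=(1-r)/T(r,w)$) together with the monotonicity of $T$ yields the claimed bound off an exceptional set. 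This gives the second line of the definition of $S(r,w)$.

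Finally, in the finite order case $\lambda(w)<\infty$ two simplifications occur. Since then $\log T(r,w)=O(\log\frac{1}{1-r})$, the term $\log^{+}T(r,w)$ is absorbed into $O(\log\frac{1}{1-r})$; and the exceptional set $E$ is removed by the monotonicity-plus-finite-order device of Lemma \ref{02}, leaving $S(r,w)=O(\log\frac{1}{1-r})$ for all $r$. The step I expect to be the main obstacle is the sharp tracking of the $\frac{1}{1-r}$ contributions in the disk logarithmic derivative estimate: unlike in the plane the boundary forces these terms, and they must be kept of exactly the right order before the bookkeeping in Selberg's scheme can close.
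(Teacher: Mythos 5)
The paper itself offers no proof of this lemma: it cites \cite{He} for Selberg's derivation and merely remarks that the error term $S(r,w)$ ``can be obtained by the same method as used in meromorphic functions.'' Your skeleton --- separating the values $a_i$, the factorization $\sum_i\frac{1}{w-a_i}=\frac{1}{w'}\sum_i\frac{w'}{w-a_i}$, and the Riemann--Hurwitz count of branch points that turns the constant $2$ into $2\nu$ --- is indeed that standard derivation. But one step of your proposal fails as stated, and it is exactly the step where the algebroid case needs a genuinely new argument: you propose to prove the logarithmic derivative estimate ``by applying the meromorphic disk version of the estimate on each branch of $w$ and averaging.'' The branches $w_j$ of a $\nu$-valued algebroid function are not meromorphic functions in the disk: they are only locally defined and are permuted by analytic continuation around the branch points, so neither Nevanlinna's lemma nor its unit-disk analogue applies to an individual $w_j$, and there is no well-defined characteristic $T(r,w_j)$ for a single branch to feed into such an estimate. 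The classical repair is to pass to single-valued symmetric combinations: with $\psi(z,a)=A_\nu(z)\prod_{j=1}^{\nu}\bigl(w_j(z)-a\bigr)$, which is a single-valued analytic function of $z$, one has $\sum_{j}\frac{w_j'}{w_j-a}=\frac{\psi'(z,a)}{\psi(z,a)}-\frac{A_\nu'(z)}{A_\nu(z)}$ (prime denoting $d/dz$), and the averaged proximity function is estimated through such expressions, equivalently through $w_j'=-A_z(z,w_j)/A_w(z,w_j)$ from the defining polynomial. That is the actual content of the lemma in \cite{He}, and without it your scheme does not close.

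A secondary inaccuracy: in the finite order case you propose to remove the exceptional set ``by the monotonicity-plus-finite-order device of Lemma \ref{02}.'' Lemma \ref{02} only asserts that the limsup of a monotone function is unchanged when computed outside $E$; it cannot upgrade an inequality valid for $r\notin E$ into one valid for every $r$, since the counting functions on the right-hand side may grow too fast across $E$ for a monotonicity argument of that type. The correct device is the two-radius form of the logarithmic derivative estimate, $m\bigl(r,\frac{w'}{w-a}\bigr)=O\bigl(\log^+T(\rho,w)+\log\frac{1}{\rho-r}\bigr)$ for $r<\rho<1$, with the choice $\rho=\frac{1+r}{2}$: when $\lambda(w)<\infty$ this immediately gives $O(\log\frac{1}{1-r})$ for all $r$, with no exceptional set introduced at all; the set $E$ is needed only in the infinite order case, where $T(\rho,w)$ must be compared with $T(r,w)$ via Borel's growth lemma.
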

\section{Main lemma}
Now we are in the position to show our main lemma which is crucial
to our theorems.
\begin{lem}\label{03}
Let $w(z)$ be a $\nu-$valued algebroid function of order
$\rho(w)=\rho$ ($0\leq\rho\leq\infty$) ,
$\limsup_{r\rightarrow1-}T(r,w)/\log\frac{1}{1-r}=\infty$ and
$\rho_2(w)=0$ in the unit disc $D(z)$. Then a $radius$ $L(\theta)$
is a Borel radius of the algebroid function $w(z)$ if and only if
for any $0<\eta<1$, the function $M(\zeta)=w(z(\zeta))$ is a
$v-$valued algebroid function of order $\rho$ in the unit disk
$D(\zeta)$, where $z=z(\zeta)$ is the function described in Lemma
\ref{01}, mapping the unit disk $D(\zeta)$ onto the sector
$G(1,\theta,\eta)$.
\end{lem}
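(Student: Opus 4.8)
The plan is to reduce the whole statement to a two-sided comparison between the angular counting functions of $w$ and the global counting functions of $M=w(z(\zeta))$, supplied by Lemma \ref{01}. For a fixed value $a$ I would introduce the shorthand
$$\sigma_\eta(a)=\limsup_{r\to1-}\frac{\log\overline{N}(r,\Delta(\theta,\eta),w=a)}{\log\frac{1}{1-r}},\qquad \tau(a)=\limsup_{\gamma\to1-}\frac{\log\overline{N}(\gamma,M=a)}{\log\frac{1}{1-\gamma}}.$$
Parts (1) and (2) of Lemma \ref{01} then yield the sandwich $\sigma_{\eta/2}(a)\le\tau(a)\le\sigma_\eta(a)$: in each estimate one substitutes $s=1-\beta(1-r)$, respectively $r=1-\delta(1-\gamma)$, observes that $\log\frac1{1-s}=\log\frac1\beta+\log\frac1{1-r}\sim\log\frac1{1-r}$ as $r\to1-$, and notes that the fixed constants $2/\beta$, $2/\delta$ together with the $O(1)$ terms all vanish upon forming the limsup. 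In addition, the first fundamental theorem gives $\overline{N}(r,\Delta(\theta,\eta),w=a)\le N(r,w=a)\le T(r,w)+O(1)$, so $\sigma_\eta(a)\le\rho$ for every $a$, and hence $\tau(a)\le\rho$ for every $a$ as well.

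For the forward implication, suppose $L(\theta)$ is a Borel radius and fix $0<\eta<1$. For the lower bound I would invoke the Borel hypothesis with $\varepsilon=\eta/2$: all but at most $2\nu$ values $a$ satisfy $\sigma_{\eta/2}(a)=\rho$, and for any such $a$ the first fundamental theorem gives $T(\gamma,M)\ge N(\gamma,M=a)\ge\overline{N}(\gamma,M=a)$, so that $\rho(M)\ge\tau(a)\ge\sigma_{\eta/2}(a)=\rho$. For the upper bound I would apply the second fundamental theorem to the $\nu$-valued algebroid function $M$ (which is algebroid by Lemma 2.1) with $q=2\nu+1$ generic values $a_1,\dots,a_{2\nu+1}$, giving $T(\gamma,M)<\sum_{i}\overline{N}(\gamma,M=a_i)+S(\gamma,M)$. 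Since $\tau(a_i)\le\rho$ for each $i$, this forces $\rho(M)\le\rho$ once the error term is controlled. Combining the two bounds gives $\rho(M)=\rho$.

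For the converse, assume $\rho(M_\eta)=\rho$ for every $0<\eta<1$, fix $\varepsilon>0$, set $\eta=\varepsilon$, and write $M=M_\varepsilon$. Applying the second fundamental theorem to $M$ as above shows that at most $2\nu$ values $a$ can have $\tau(a)<\rho$: if $2\nu+1$ of them did, the order of $\sum_i\overline{N}(\gamma,M=a_i)$, and hence (after absorbing $S(\gamma,M)$) of $T(\gamma,M)$, would be strictly below $\rho$, contradicting $\rho(M)=\rho$. Therefore $\tau(a)=\rho$ for all but at most $2\nu$ values, and for each such good $a$ the sandwich gives $\rho=\tau(a)\le\sigma_\varepsilon(a)\le\rho$, i.e. $\sigma_\varepsilon(a)=\rho$. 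As $\varepsilon>0$ was arbitrary and the exceptional set has cardinality at most $2\nu$, this is exactly the statement that $L(\theta)$ is a Borel radius of $w$.

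The main obstacle is the handling of the error term $S(\gamma,M)=O\bigl(\log\frac1{1-\gamma}+\log T(\gamma,M)\bigr)$, both in the upper bound of the forward direction and in the absorption step of the converse. Off the exceptional set $E$, which is harmless by Lemma \ref{02} since $\int_E\frac{d\gamma}{1-\gamma}<\infty$, one must show that $S(\gamma,M)$ grows more slowly than $T(\gamma,M)$: the term $\log\frac1{1-\gamma}$ is of order zero, while $\log T(\gamma,M)$ is absorbed into $(q-2\nu)T(\gamma,M)$ because $\log T=o(T)$ as $T\to\infty$. It is precisely here that the standing hypotheses $\rho_2(w)=0$ and $\limsup_{r\to1-}T(r,w)/\log\frac1{1-r}=\infty$ do their work: they guarantee that $T(r,w)$ is neither too slowly growing for the order to be meaningful nor fast enough to spoil $\rho_2(M)=0$, so that the error term never affects the order computation, even in the regime $\rho=\infty$. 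Checking that these growth conditions pass from $w$ to $M$ through the conformal change $z=z(\zeta)$, with Lemma \ref{01}(3) as the relevant input, is the one step requiring genuine care; the remainder is routine bookkeeping with the limsups above.
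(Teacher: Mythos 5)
Your proposal is correct and follows essentially the same route as the paper's own proof: the two-sided comparison of angular and global counting functions from Lemma \ref{01}, the first and second fundamental theorems applied to $M$ with $2\nu+1$ values, Lemma \ref{02} to discard the exceptional set of the error term, and the pigeonhole argument on $2\nu+1$ values for the converse. The only cosmetic difference is that you obtain the upper bound $\tau(a)\le\rho$ for every $a$ via the first fundamental theorem applied to $w$, whereas the paper instead selects $2\nu+1$ values satisfying the Borel property simultaneously for the apertures $\eta$ and $\eta/2$.
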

\begin{proof}$"\Longrightarrow"$

Let $L(\theta)$ be a Borel radius of the function $w(z)$. Then for
any fixed $0<\eta<1$, there exist $2\nu+1$ different values
$a_1,\cdots, a_{2\nu+1}$ on the complex plane, such that
$$\limsup\limits_{r\rightarrow1-}\frac{\log \overline{N}(r,\Delta(\theta,\varphi),w=a_i)}{\log\frac{1}{1-r}}=\rho, (i=1,2,\cdots, 2\nu+1; \varphi=\eta,\frac{\eta}{2}).$$
Applying Lemma \ref{01} to the function $w(z)$, we have

\begin{equation*}
\begin{split}
\limsup\limits_{\gamma\rightarrow1-}\frac{\log \overline{N}(\gamma,
M=a_i)}{\log\frac{1}{1-\gamma}}&=\limsup\limits_{r\rightarrow1-}\frac{\log\frac{2}{\beta}
\overline{N}(1-\beta(1-r), M=a_i)}{\log\frac{1}{1-(1-\beta(1-r))}}\\
&\geq\limsup\limits_{r\rightarrow1-}\frac{\log
\overline{N}(r, \Delta(\theta,\frac{\eta}{2}), w=a_i)}{\log\frac{1}{1-r}}=\rho(i=1,2,\cdots,2\nu+1).\\
\end{split}
\end{equation*}
Therefore the order of the function $M(\zeta)$ is not less than
$\rho$. Apply Lemma \ref{01} to the function $w(z)$, we have
\begin{equation*}
\begin{split}
\limsup\limits_{\gamma\rightarrow1-}\frac{\log \overline{N}(\gamma,
M=a_i)}{\log\frac{1}{1-\gamma}}&\leq\limsup\limits_{\gamma\rightarrow1-}\frac{\log\frac{2}{\delta}
\overline{N}(1-\delta(1-\gamma), \Delta(\theta,\eta), w=a_i)}{\log\frac{1}{1-(1-\delta(1-\gamma))}}\\
&=\limsup\limits_{r\rightarrow1-}\frac{\log
\overline{N}(r,\Delta(\theta,\eta), w=a_i)}{\log\frac{1}{1-r}}=\rho(i=1,2,\cdots,2\nu+1).\\
\end{split}
\end{equation*}
Applying the second fundamental theorem to the function $M(\zeta)$.
We obtain
$$T(\gamma,M)\leq\sum\limits_{i=1}^{2\nu+1}\overline{N}(\gamma,M=a_i)+O(\log\frac{1}{1-\gamma}+\log T(\gamma,M)), \gamma\notin E,$$
where $E$ is a set with $\int_E\frac{1}{1-\gamma}d\gamma<\infty$.
Hence
$$\limsup\limits_{\gamma\notin E, \gamma\rightarrow1-}\frac{\log T(\gamma,M)}{\log\frac{1}{1-\gamma}}
\leq\limsup\limits_{\gamma\notin E, \gamma\rightarrow1-}\frac{\log
\sum\limits_{i=1}^{2\nu+1}\overline{N}(\gamma,M=a_i)}{\log\frac{1}{1-\gamma}}=\rho.$$
Applying Lemma \ref{02}, we can see that the order of the function
$G(\zeta)$ is $\rho$.

$ "\Longleftarrow"$ Now for any fixed $0<\eta<1$, let
$M(\zeta)=w(z(\zeta))$ be a $\nu-$valued algebroid function of order
$\rho$ in the unit disk $D(\zeta)$, where $z=z(\zeta)$ is the
mapping function defined in Lemma \ref{01}. Then for any $2\nu+1$
different values $a_1,a_2,\cdots,a_{2\nu+1}$, applying the second
fundamental theorem, we have
\begin{equation*}
\begin{split}
T(\gamma,M)&\leq\sum\limits_{i=1}^{2\nu+1}\overline{N}(\gamma,M=a_i)+O(\log\frac{1}{1-\gamma}+\log T(\gamma,M))\\
&\leq\sum\limits_{i=1}^{2\nu+1}\frac{2}{\delta}\overline{N}(1-\delta(1-\gamma),\Delta(\theta,\eta), w=a_i)+O(\log\frac{1}{1-\gamma}+\log T(\gamma,M))\\
\end{split}
\end{equation*}
hence by Lemma \ref{02}
\begin{equation*}
\begin{split}
\rho&=\limsup\limits_{\gamma\notin E,\gamma\rightarrow1-}\frac{\log
T(\gamma,M)}{\log\frac{1}{1-\gamma}}\leq\limsup\limits_{\gamma\rightarrow1-}\frac{\log\sum\limits_{i=1}
^{2\nu+1}\overline{N}(1-\delta(1-\gamma),\Delta(\theta,\eta),w=a_i)}{\log\frac{1}{1-(1-\delta(1-\gamma))}}\\
&=\limsup\limits_{r\rightarrow1-}\frac{\log\sum\limits_{i=1}
^{2\nu+1}\overline{N}(r,\Delta(\theta,\eta),w=a_i)}{\log\frac{1}{1-r}}\leq\limsup\limits_{r\rightarrow1-}\frac{\log\sum\limits_{i=1}
^{2\nu+1}\overline{N}(r,w=a_i)}{\log\frac{1}{1-r}}=\rho.
\end{split}
\end{equation*}
Thus $L(\theta)$ is a Borel radius of the function $w(z)$.
\end{proof}

\section{Proof of the theorems}
Suppose that $w(z)$ is a $v-$valued algebroid function of order
$\rho$ in the unit disk $D(z)$ and $L(\theta)$ be a Borel radius of
$w(z)$. For any $0<\eta<1$, we write $M(\zeta)=w(z(\zeta))$, where
$z=z(\zeta)$ is the function in Lemma \ref{01}. Since
$M'(\zeta)=w'(z(\zeta))z'(\zeta)$, we have
$$T(t,M'(\zeta))\leq T(t, w'(z(\zeta)))+T(t,z'(\zeta))$$
$$T(t,w'(z(\zeta)))\leq T(t, M'(\zeta))+T(t,\frac{1}{z'(\zeta)})=T(t, M'(\zeta))+T(t,z'(\zeta))+O(1).$$
Combining the above two inequalities and noting Lemma \ref{01}, we
have
\begin{equation}
|T(t,M'(\zeta))-T(t,w'(z(\zeta))|\leq|T(t,z'(\zeta))|\leq3\log\frac{2}{1-t}+\log\frac{\pi}{\eta}.
\end{equation}
By Lemma \ref{lem2.3}, we can see that $\rho(M')=\rho(M)=\rho$.
Therefore the order of the function $w'(z(\zeta))$ is also $\rho$.
Then by Lemma \ref{03}, $L(\theta)$ is also a Borel radius of the
function $w'(z)$.

Next we suppose that $L(\theta)$ is a Borel radius of the function
$w'(z)$. By Lemma \ref{03}, the function $w'(z(\zeta))$ is an
algebroid function of order $\rho$ in the unit disk $D(\zeta)$. Then
the order of the function $M(\zeta)$ is also $\rho$. Moreover, we
use Lemma \ref{03}, we obtain that $L(\theta)$ is a Borel radius of
the function $w(z)$.

\section{Open question}
In some literatures, we have known that a radius $L(\theta)$ is a
Borel radius of a $\rho-$order meromorphic function if and only if
there exists a $\varepsilon>0$ such that
\begin{equation}\label{5.1}\limsup\limits_{r\rightarrow1-}\frac{\log
T(r,\Delta(\theta,\varepsilon),f)}{\log\frac{1}{1-r}}=\rho.
\end{equation} And it is easy to prove that if $L(\theta)$ is a Borel
radius of a $\rho-$order algebroid function $w(z)$, then \eqref{5.1}
holds. Here we ask if the converse proposition holds.

\end{document}